\journal{Applied Mathematics Letters}
\newtheorem{theorem}{Theorem}[section]
\newtheorem{lemma}[theorem]{Lemma}
\newenvironment{proof}{{\noindent\bf Proof.}}
\begin{document}

\begin{frontmatter}

\title{Uniqueness in determining a convex polygonal source of an elastic body}

\author{Jianli Xiang}
\address{Three Gorges Mathematical Research Center, College of Science, China Three Gorges University, Yichang 443002, People's Republic of China}
\ead{xiangjianli@ctgu.edu.cn}





\begin{abstract}
  In this work, we consider the time-harmonic inverse elastic source problem of a fixed frequency for the Navier equation in two dimensions. We show that a convex polygon can be uniquely determined by a single far field measurement. Our approach relies on the corner singularity analysis of solutions to the inhomogeneous Navier equation with a source term in a sector. This paper also contributes to corner scattering theory for the Navier equation in an non-convex domain.
\end{abstract}

\begin{keyword}
Inverse source problem \sep elastic waves \sep Navier equation \sep convex polygon \sep uniqueness
\MSC[2010] 35P25 \sep 35R30 \sep 45Q05 \sep 78A46
\end{keyword}

\end{frontmatter}

\section{Introduction}
Inverse problems are very important in science and engineering. Among these, the inverse source problem is an important research topic and has attracted great attention of many researchers over recent years. Its goal is to determine the shape of unknown sources by measuring the scattered wave patterns in the near field or the radiated wave patterns in the far field. Mathematically, the inverse source problem for acoustic \cite{Bao2015,Bao2010,Bleistein1977}, electromagnetic \cite{Albanese2006,Bao2002} and elastic waves \cite{Blasten2019,Griesmaier2018} has been studied widely by many researchers.

The inverse source problem is ill-posed and its solution requires a priori knowledge. In contrast to the time-harmonic inverse scattering problem where there is some control on the incident waves, the inverse source problem of a fixed frequency has received less attention. In 1999, Ikehata \cite{Ikehata1999} used the enclosure method to investigate the two-dimensional problem. He showed that the supporting function of a polygon can be identified by measuring the acoustic field, i.e., the Cauchy data on a remote closed surface. In 2009, Alves, Martins and Roberty \cite{Alves2009} presented a class of source functions where the identification from boundary data is possible and they derived a method to retrieve the source by solving a higher order direct problem. In 2011, Badia and Nara \cite{Badia2011} proved the uniqueness and local stability from the Cauchy data with a single wave number, where the source consists of multiple point sources.

In recent years, the corner scattering method has developed rapidly in inverse scattering theory (\cite{BPS2014,ElHu2015,EH2018}) for justifying the absence of non-scattering energies and non-radiating sources, and then has been extended and used for the shape determination of polyhedral scatterers. In 2018, Bl{\aa}sten \cite{Blasten2018} considered convex polyhedral sources with only one measurement, whose proof was based on an energy identity from the enclosure method and the construction of a new type of planar complex geometrical optics solution. Then in 2019, Bl{\aa}sten and Lin \cite{Blasten2019} extended the single measurement enclosure method technique \cite{Blasten2018} to the elastic setting. In 2020, Hu and Li \cite{Hu2020} provided new insights into inverse source problems with a single far-field pattern in an inhomogeneous background medium. They proved that the gradient of $C^{1,\alpha}$-smooth source terms at corner points can be uniquely identified, in addition to the information on source values at corners and the convex-polygonal support. And then Ma and Hu \cite{Ma2021} addressed the one-wave factorization method to image the convex source support of polygonal type. Inspired by these fruitful researches, we extend the corner scattering technique to study the elastic scattering problem, and have shown that certain penetrable scatterers with rectangular corners scatter every incident wave nontrivially \cite{Xiang2025}.

In this paper we consider convex polyhedral sources with only one measurement. This work is a nontrivial extension of the method proposed in \cite{Hu2020} for the inverse source scattering problem of the Helmholtz equation to solve the inverse source scattering problem of the Navier equation. Clearly, the elastic wave equation is more challenging due to the coexistence of compressional waves and shear waves that propagate at different speeds. Hence novel analysis is required.

The rest of the paper is organized as follows. Section 2 provides a precise description of the source problem and the main result. Section 3 presents the detailed proof and novel analysis, focusing on the uniqueness result of a convex polygonal elastic source with a single far field pattern.

\section{Mathematical Formulation and Main Results}
In this paper, we consider the inverse source scattering problem of time-harmonic elastic waves. Assume that the Lam\'e constants $\lambda$ and $\mu$ satisfying $\mu>0$, $\mu+\lambda>0$. The scattering problem we are dealing with is now modeled by the following Navier equation
\begin{equation*}
\mu\Delta u+(\lambda+\mu)\nabla(\nabla\cdot u)+\omega^{2}u=S, \quad {\rm in} \quad \mathbb{R}^{2}.
\end{equation*}
Here, $u=u^{{\rm in}}+u^{{\rm sc}}$ is the total displacement field which is the superposition of the given incident plane wave $u^{{\rm in}}$ and the scattered wave $u^{{\rm sc}}$. The circular frequency $\omega>0$ and the source term $S$ is supposed to be compactly supported on $\overline{D}$.

By the Helmholtz decomposition theorem, the scattered field $u^{{\rm sc}}$ can be decomposed as $u^{{\rm sc}}=u^{{\rm sc}}_{p}+u^{{\rm sc}}_{s}$, where $u^{{\rm sc}}_{p}$ denotes the compressional wave and $u^{{\rm sc}}_{s}$ denotes the shear wave, $k_{p}$ is the compressional wave number and $k_{s}$ is the shear wave number. They are given by the following forms respectively:
\begin{eqnarray*}
u^{{\rm sc}}_{p}=-\frac{1}{k_{p}^{2}}{\rm grad}\,{\rm div}\, u^{{\rm sc}},\quad k_{p}=\omega\sqrt{\frac{1}{2\mu+\lambda}}, \quad
u^{{\rm sc}}_{s}=\frac{1}{k_{s}^{2}}\overrightarrow{{\rm curl}}\, {\rm curl}\, u^{{\rm sc}},\quad k_{s}=\omega\sqrt{\frac{1}{\mu}},
\end{eqnarray*}
and they satisfy $\triangle u^{{\rm sc}}_{p}+k_{p}^{2}u^{{\rm sc}}_{p}=0$ and $\triangle u^{{\rm sc}}_{s}+k_{s}^{2}u^{{\rm sc}}_{s}=0$. In addition, the Kupradze radiation condition is required to the scattered field $u^{{\rm sc}}$, i.e.
\begin{equation}  \label{rad}
\mathop {\lim }\limits_{r \to \infty }\sqrt[]{r} \Big(\frac{\partial u^{{\rm sc}}_{p}}{\partial r}-ik_{p}u^{{\rm sc}}_{p}\Big)=0,~\quad~
\mathop {\lim }\limits_{r \to \infty }\sqrt[]{r} \Big(\frac{\partial u^{{\rm sc}}_{s}}{\partial r}-ik_{s}u^{{\rm sc}}_{s}\Big)=0, \quad r=|x|.
\end{equation}
And the radiation condition (\ref{rad}) is assumed to hold in all directions $\hat{x}=x/|x|\in\mathbb{S}$, here $\mathbb{S}:=\{x:|x|=1\}$ denotes the unit circle in $\mathbb{R}^{2}$.

It is well known that the radiating solution to the Navier equation has the following asymptotic behavior:
\begin{equation*}
u^{{\rm sc}}(x)=\frac{e^{ik_{p}r}}{\sqrt{r}}u_{p}^{\infty}(\hat{x}) +\frac{e^{ik_{s}r}}{\sqrt{r}} u_{s}^{\infty}(\hat{x}) +\mathcal{O}\left(\frac{1}{r^{3/2}}\right),\quad {\rm as} \quad r\rightarrow+\infty,
\end{equation*}
where $u_{p}^{\infty}(\hat{x})\parallel\hat{x}$ and $u_{s}^{\infty}(\hat{x})\perp\hat{x}$. The functions $u_{p}^{\infty}$, $u_{s}^{\infty}$ are known as compressional and shear far field pattern of $u^{{\rm sc}}$, respectively,
\begin{equation*}
u^{\infty}(\hat{x})=u_{p}^{\infty}(\hat{x})+u_{s}^{\infty}(\hat{x}).
\end{equation*}

Assume that the incident wave is given by
\begin{eqnarray*}
u^{{\rm in}}(x;d,q)=u_{p}^{{\rm in}}(x;d)+u_{s}^{{\rm in}}(x;d,q)=de^{ik_{p}x\cdot d} +qe^{ik_{s}x\cdot d}, \quad d,q\in\mathbb{S},\quad q\perp d,
\end{eqnarray*}
where $d$ is the incident direction. 

\begin{theorem} \label{Main}
Assume that $D\subset\mathbb{R}^{2}$ is a convex polygon and the source term $S(x)$ has non-zero value on every corner of $D$. Then $\partial D$ can be uniquely determined by a single far field pattern $u^{\infty}(\hat{x})$ for all $\hat{x}\in\mathbb{S}$.
\end{theorem}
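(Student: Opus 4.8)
The plan is to argue by contradiction using the standard corner-scattering paradigm, adapted to the Navier system. Suppose two convex polygons $D_1$ and $D_2$ with source terms $S_1$, $S_2$ (each nonvanishing at every vertex) produce the same far field pattern $u^\infty(\hat x)$ for all $\hat x\in\mathbb{S}$. By Rellich's lemma for the elastic equation (applied separately to the compressional and shear components, which are decoupled by the far-field orthogonality $u_p^\infty\parallel\hat x$, $u_s^\infty\perp\hat x$), the two scattered fields coincide outside $\overline{D_1\cup D_2}$, and hence the two total fields agree there. If $D_1\neq D_2$ then, by convexity, one of the polygons — say $D_1$ — has a vertex $O$ that lies on $\partial D_1$ but outside $\overline{D_2}$ (after relabeling, possibly on the boundary of the convex hull). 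Near $O$ one can choose a small sector $K = B_\rho(O)\cap D_1$ not meeting $\overline{D_2}$, on which the difference $w := u^{(1)} - u^{(2)}$ of total fields satisfies the homogeneous Navier equation $\mu\Delta w + (\lambda+\mu)\nabla(\nabla\cdot w) + \omega^2 w = 0$, while $u^{(1)}$ itself solves the inhomogeneous equation $\mu\Delta u^{(1)} + (\lambda+\mu)\nabla(\nabla\cdot u^{(1)}) + \omega^2 u^{(1)} = S_1$ in $K$. Subtracting, $u^{(2)}$ — which is real-analytic near $O$ (being a solution to the homogeneous equation in a neighborhood of $O$, as $O\notin\overline{D_2}$) — satisfies $\mu\Delta u^{(2)} + (\lambda+\mu)\nabla(\nabla\cdot u^{(2)}) + \omega^2 u^{(2)} = -S_1 + (\text{analytic})$ in $K$, so effectively we have an elastic solution in a sector whose source has a prescribed nonzero value $S_1(O)\neq 0$ at the tip.

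The crux is then the corner singularity lemma for the inhomogeneous Navier equation in a sector, which I expect is the technical core invoked from the paper's own Section 3: if $v$ solves $\mu\Delta v + (\lambda+\mu)\nabla(\nabla\cdot v)+\omega^2 v = S$ in a sector $K$ with opening angle strictly less than $\pi$ (convexity of $D_1$ at $O$ forces this), and $v$ together with the relevant traces is sufficiently regular up to the tip, then $S(O)=0$. The proof of that lemma is carried out by testing the equation against a family of complex geometric optics (CGO) solutions of the form built from $e^{\tau(\theta_1 x_1 + i\theta_2 x_2)\cdot \text{(suitable vector)}}$ adapted to the two wave numbers $k_p$ and $k_s$, or equivalently by the vector-valued analogue of the harmonic-polynomial / CGO test functions used by Hu--Li. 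One integrates the Navier equation against such a CGO solution $w_\tau$ over $K$, uses the second Betti (Green) formula for the Lamé operator to move all derivatives onto $w_\tau$, and extracts the leading asymptotics as $\tau\to\infty$. Because the sector angle is less than $\pi$, the domain integral $\int_K S\cdot w_\tau$ is dominated by the contribution near the tip and behaves like $c\, S(O)\, \tau^{-2}$ with $c\neq 0$, whereas the regularity of $v$ up to the boundary forces the boundary terms to decay faster; dividing by $\tau^{-2}$ and sending $\tau\to\infty$ yields $S(O)=0$, a contradiction.

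There are three points that need genuine care, and I expect the decoupling-plus-CGO construction to be the main obstacle. First, one must justify the regularity of $u^{(1)}$ (hence of $w$ and of the relevant traces) up to the corner $O$ from inside $K$; since the source $S$ is merely assumed nonvanishing at corners, one needs enough smoothness of $S$ near $O$ — presumably $S$ Hölder or $C^1$ near corners is implicitly assumed — so that interior elliptic estimates plus the explicit fundamental solution of the Lamé system give $u^{(1)}\in C^{1,\alpha}$ or better near $O$, matching the regularity hypotheses of the singularity lemma. Second, the CGO solutions for the Navier equation are not as simple as for Helmholtz: one must build vector fields $w_\tau$ that are simultaneously (up to lower order) in the kernel of $\mu\Delta + (\lambda+\mu)\nabla\,\mathrm{div} + \omega^2$, which is cleanest via the Helmholtz decomposition $w_\tau = \nabla\varphi_\tau + \mathrm{curl}\,\psi_\tau$ with $\varphi_\tau$, $\psi_\tau$ CGO solutions of the scalar Helmholtz equations at wave numbers $k_p$, $k_s$ respectively — but then one must check that the combination still produces a nonzero leading coefficient $c$ in the tip asymptotics and that cross terms between the $p$- and $s$-parts do not cancel it. Third, one should double-check the Rellich step: the far-field splitting into $u_p^\infty$ and $u_s^\infty$ must be used to conclude $u_p^{(1),\mathrm{sc}} = u_p^{(2),\mathrm{sc}}$ and $u_s^{(1),\mathrm{sc}} = u_s^{(2),\mathrm{sc}}$ separately outside the scatterers, which requires $k_p\neq k_s$ (true since $\mu+\lambda>0$ gives $2\mu+\lambda>\mu$) so that the two far-field components are linearly independent as functions of $\hat x$ and radius. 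Once these are in place, the contradiction established at one protruding vertex shows $D_1=D_2$, and then $S_1=S_2$ follows as well on a neighborhood of each vertex; in particular $\partial D$ is uniquely determined, which is the assertion of Theorem~\ref{Main}.
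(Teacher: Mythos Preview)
Your overall architecture --- Rellich to identify the scattered fields outside $\overline{D_1\cup D_2}$, convexity to locate a vertex $O\in\partial D_1\setminus\overline{D_2}$, and then a corner lemma forcing $S_1(O)=0$ --- matches the paper. But two things diverge.

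First, your local setup is garbled. In the sector $K=B_\rho(O)\cap D_1$ one has $\mathcal{L}u^{(1)}+\omega^2 u^{(1)}=S_1$ and $\mathcal{L}u^{(2)}+\omega^2 u^{(2)}=0$ (since $O\notin\overline{D_2}$), so $w=u^{(1)}-u^{(2)}$ satisfies $\mathcal{L}w+\omega^2 w=S_1$, \emph{not} the homogeneous equation you wrote; and your line about $u^{(2)}$ carrying a source $-S_1+(\text{analytic})$ is simply wrong. The correct picture is: $w$ solves the inhomogeneous Navier equation with source $S_1$ in $K$, and $w=Tw=0$ on the two edges $\Gamma^\pm=\partial D_1\cap B_\rho(O)$ (from the transmission conditions for $u^{(1)}$ and the smoothness of $u^{(2)}$ across those edges, together with $u^{(1)}=u^{(2)}$ in $B_\rho(O)\setminus\overline{D_1}$). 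That is exactly the hypothesis of the paper's Lemma~\ref{mainlem}.

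Second, and more substantively, the paper's corner lemma is \emph{not} proved via CGO solutions or enclosure-type asymptotics. It is completely elementary: from $w=Tw=0$ on an edge one first extracts $\partial_\nu w=0$ there (Lemma~\ref{lem1}, using the tangential G\"unter derivative and the algebra of the traction operator); then, because the two edge directions $\tau_1,\tau_2$ are linearly independent (opening angle $\neq\pi$), the tangential and normal derivatives along both edges span all first- and second-order partials at $O$, giving $\nabla w(O)=0$ and $\nabla^2 w(O)=0$ directly. Hence $\mathcal{L}w(O)=0$, and evaluating the equation at $O$ yields $S_1(O)=0$. No CGO construction, no Betti-formula integration, no Helmholtz decomposition of test functions. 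Your CGO route (essentially the Bl{\aa}sten--Lin strategy) would also reach the conclusion, but it is substantially heavier --- one must build vector CGO solutions compatible with both wave speeds and verify a nonvanishing leading coefficient --- and it is typically restricted to convex opening angles. The paper's differentiation argument bypasses all of that and in fact covers any angle in $(0,2\pi)\setminus\{\pi\}$; the price is that it implicitly needs enough pointwise regularity of $w$ at the tip to evaluate $\nabla^2 w(O)$, which the CGO approach does not.
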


\section{Preliminary Lemmas and Proof of Main Theorem}
In this section, we will firstly present some lemmas to prepare for the proof of Theorem \ref{Main}, which are also interesting on their own right.

Let $(r,\theta)$ be the polar coordinates of $(x_{1}, x_{2})^{\top}\in\mathbb{R}^{2}$ and $\nu\in\mathbb{R}^{2}$ be the unit normal vector. Denote by $B_{R}(z):=\{x\in\mathbb{R}^{2}:|x-z|<R\}$ the disk centered at $z\in\mathbb{R}^{2}$ with radius $R>0$, and
\begin{equation*}
\mathcal{L}:=\mu\Delta+(\lambda+\mu) \nabla \nabla\cdot, \quad  T:=2\, \mu\, \partial_\nu+\lambda\, \nu\, {\rm div}+\mu\, \nu \times {\rm curl}.
\end{equation*}
\begin{lemma} \label{lem1}
Let $\Gamma\subset\mathbb{R}^2$ be a Lipschitz surface. Then $u_1=u_2$ and $Tu_1=Tu_2$ on $\Gamma$ implies that $\partial_\nu u_1=\partial_\nu u_2$ on $\Gamma$.
\end{lemma}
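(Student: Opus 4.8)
The plan is to set $w:=u_1-u_2$ and to reduce the assertion to the pointwise claim that, at almost every point of $\Gamma$, the two conditions $w=0$ and $Tw=0$ force $\partial_\nu w=0$. Since $\Gamma$ is Lipschitz, at almost every point there is a well-defined unit normal $\nu$ and a unit tangent $\tau$, and $\{\nu,\tau\}$ is an orthonormal frame of $\mathbb{R}^2$; the traces $w|_\Gamma$, $\partial_\nu w|_\Gamma$ and $Tw|_\Gamma$ are taken in the usual sense (for this it suffices that $u_1,u_2$ be sufficiently regular near $\Gamma$, e.g.\ $H^2$, which holds in our application since the source is supported away from the relevant part of $\Gamma$).

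First I would use the Dirichlet datum $w=0$ on $\Gamma$: differentiating along $\Gamma$ kills the tangential derivative, so $(\nabla w)\tau=\partial_\tau w=0$ on $\Gamma$, where $\nabla w=(\partial_j w_i)_{i,j}$ is the Jacobian matrix. Hence on $\Gamma$ the matrix $\nabla w$ is rank one, $\nabla w=(\partial_\nu w)\otimes\nu$, i.e.\ $\partial_j w_i=(\partial_\nu w)_i\,\nu_j$. From this rank-one structure I can read off the two scalar fields entering $T$: $\mathrm{div}\,w=\mathrm{tr}(\nabla w)=(\partial_\nu w)\cdot\nu$ and, in two dimensions, $\mathrm{curl}\,w=\partial_1 w_2-\partial_2 w_1=(\partial_\nu w)\cdot\tau$ once $\tau=\nu^{\perp}$ is fixed with a definite orientation, so that $\nu\times\mathrm{curl}\,w=-\big((\partial_\nu w)\cdot\tau\big)\tau$.

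Next I would substitute these identities into $Tw=2\mu\,\partial_\nu w+\lambda\,\nu\,\mathrm{div}\,w+\mu\,\nu\times\mathrm{curl}\,w$ and decompose $\partial_\nu w=a\,\nu+b\,\tau$ with $a=(\partial_\nu w)\cdot\nu$ and $b=(\partial_\nu w)\cdot\tau$. A one-line computation then gives
\[
Tw=(2\mu+\lambda)\,a\,\nu+\mu\,b\,\tau\qquad\text{on }\Gamma ,
\]
which is just the familiar identity $Tw=\lambda(\mathrm{div}\,w)\,\nu+2\mu\,(\nabla^{s}w)\,\nu$ with $\nabla^{s}$ the symmetrized gradient, evaluated on the rank-one $\nabla w$. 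Because $\{\nu,\tau\}$ is a basis, $Tw=0$ is equivalent to $(2\mu+\lambda)a=0$ and $\mu b=0$; the standing assumptions $\mu>0$ and $\mu+\lambda>0$ (hence $2\mu+\lambda>0$) yield $a=b=0$, i.e.\ $\partial_\nu w=0$ on $\Gamma$, which is precisely $\partial_\nu u_1=\partial_\nu u_2$ on $\Gamma$.

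I do not expect a genuine obstacle here; the only care needed is bookkeeping. One must fix once and for all the two-dimensional conventions for $\mathrm{div}$, $\mathrm{curl}$ and the symbol $\nu\times\mathrm{curl}$ (equivalently, the orientation of $\tau=\nu^{\perp}$), and one must make sure the traces are meaningful almost everywhere on the Lipschitz surface $\Gamma$, which requires only mild local regularity of $u_1,u_2$ near $\Gamma$. The substance of the lemma is the elementary observation that, restricted to the zero set of $w$, the traction operator acts on $\partial_\nu w$ as the invertible diagonal map $\mathrm{diag}(2\mu+\lambda,\mu)$ in the normal/tangential frame, so that $Tw=0$ forces $\partial_\nu w$ to vanish.
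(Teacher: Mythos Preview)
Your proof is correct and follows essentially the same route as the paper: both set $w=u_1-u_2$, use $w=0$ on $\Gamma$ to kill the tangential derivatives, and then read off from $Tw=0$ that the normal and tangential components of $\partial_\nu w$ vanish with coefficients $\lambda+2\mu$ and $\mu$. The only cosmetic difference is that the paper packages the step ``$\partial_\tau w=0$'' via the tangential G\"unter derivative $\mathcal{M}w=\partial_\nu w-\nu\,\mathrm{div}\,w+\nu\times\mathrm{curl}\,w$ (which vanishes because $w=0$ on $\Gamma$) before extracting $\nu\cdot Tw$ and $\nu\times Tw$, whereas you work directly with the rank-one structure $\nabla w=(\partial_\nu w)\otimes\nu$; the resulting identities and the use of $\mu>0$, $\lambda+2\mu>0$ are identical.
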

\begin{proof}
Set $w:=u_1-u_2$. Then we have $w=Tw=0$ on $\Gamma$. Introduce the tangential G\"unter derivative
\begin{eqnarray*}
\mathcal{M}w=\partial_\nu w-\nu\, {\rm div}\,w+\nu \times {\rm curl}\, w.
\end{eqnarray*}
The condition $w=0$ on $\Gamma$ implies that $\mathcal{M}w=0$ on $\Gamma$. Note that
\begin{eqnarray} \label{A.2}
Tw =\mu\, \partial_\nu w+(\lambda+\mu)\, \nu\, {\rm div}\, w
 =(\lambda+2\mu)\,\partial_\nu w+(\lambda+\mu)\, \nu \times {\rm curl}\,w .
\end{eqnarray}
Then we obtain from \eqref{A.2} that
\begin{eqnarray*}
\nu \times Tw=\mu\, \nu \times \partial_\nu w, \quad \nu \cdot Tw=(\lambda+2\mu)\, \nu \cdot \partial_\nu w,
\end{eqnarray*}
respectively. Therefore,
\begin{eqnarray*}
\partial_\nu w=\frac{1}{\mu}(\nu \times Tw) \times \nu+\frac{1}{\lambda+2\mu}(\nu \cdot Tw)\nu=0 \quad {\rm on} \quad \Gamma,
\end{eqnarray*}
i.e., $\partial_\nu u_1=\partial_\nu u_2$ on $\Gamma$.
\end{proof}
\begin{lemma} \label{mainlem}
Suppose the Lam\'e parameters $\mu$ and $\lambda$ are all constants in $B_{R}(O)$. Let $D_{R}:=\{(r,\theta):0<r<R,\,0<\theta<\varphi\}\subset\mathbb{R}^{2}$ be a sector with the opening angle $\varphi\in(0,2\pi)\backslash\{\pi\}$ at the origin $O$. Define $\Gamma_{R}^{+}=\{(r,\theta):0<r<R,\,\theta=0\}$ and $\Gamma_{R}^{-}=\{(r,\theta):0<r<R,\,\theta=\varphi\}$. If the solution pair $u_{\ell}\in [H^{2}(B_{R}(O))]^{2}$ $(\ell=1,2)$ solves the coupling problem
\begin{align*}
&\mathcal{L} u_{\ell}+\omega^{2} u_{\ell}=S_{\ell} \quad {\rm in}~~B_{R}(O), \\
& u_{1}=u_{2},\quad Tu_{1}=Tu_{2}\quad {\rm on}~~\Gamma_{R}^{\pm};
\end{align*}
then $S_{2}(O)=S_{1}(O)$.
\end{lemma}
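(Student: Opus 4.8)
The plan is to reduce to the single field $w:=u_{1}-u_{2}$, which by linearity solves $\mathcal{L}w+\omega^{2}w=S$ in $B_{R}(O)$ with $S:=S_{1}-S_{2}$ and has vanishing Cauchy data $w=Tw=0$ on $\Gamma_{R}^{\pm}$; it then suffices to show $S(O)=0$. The mechanism is a Betti (second Green) identity on a corner sector, tested against a family of explicit complex--geometrical--optics (CGO) solutions of the homogeneous Navier equation that concentrate at the vertex $O$ as a parameter $\tau\to\infty$. A geometric point must be settled first, because a real exponential cannot decay on a sector of opening $\ge\pi$: if $\varphi<\pi$ I work in $D_{R}$ itself, while if $\varphi>\pi$ I work in the complementary sector $D_{R}':=\{(r,\theta):0<r<R,\ \varphi<\theta<2\pi\}$, whose opening $2\pi-\varphi$ lies in $(0,\pi)$ and which is still bounded by the two edges $\Gamma_{R}^{\pm}$. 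This is legitimate because $w\in[H^{2}(B_{R}(O))]^{2}$ solves the Navier equation on the whole disk and its Cauchy data vanishes on the shared edges, and it is exactly here that the hypothesis $\varphi\neq\pi$ is used, since for $\varphi=\pi$ neither sector is convex and the construction degenerates. Write $D$ for the chosen sector, $\Lambda:=\partial D\cap\partial B_{R}(O)$ for its arc, and $\varphi^{*}:=\min(\varphi,2\pi-\varphi)\in(0,\pi)$ for its opening.

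For the test fields I build CGO solutions from scalar Helmholtz exponentials. For $\rho=\rho(\tau)\in\mathbb{C}^{2}$ with $\rho\cdot\rho=-k_{p}^{2}$ the field $v_{\tau}^{p}:=\rho\,e^{\rho\cdot x}=\nabla e^{\rho\cdot x}$ solves $\mathcal{L}v+\omega^{2}v=0$, since $\mathcal{L}(\nabla\phi)+\omega^{2}\nabla\phi=\nabla\big((\lambda+2\mu)\Delta\phi+\omega^{2}\phi\big)$ and $\Delta e^{\rho\cdot x}+k_{p}^{2}e^{\rho\cdot x}=0$; similarly $v_{\tau}^{s}:=(\rho_{2},-\rho_{1})\,e^{\rho\cdot x}$ is a divergence-free solution when $\rho\cdot\rho=-k_{s}^{2}$. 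Choosing $\rho=\tau e^{-i\beta}(1,i)-\tfrac{k_{p}^{2}}{4\tau}e^{i\beta}(1,-i)$ (and analogously for $k_{s}$), one has $\rho\cdot x=\tau r\,e^{i(\theta-\beta)}+O(\tau^{-1})$ uniformly for $|x|\le R$, so $|e^{\rho\cdot x}|=e^{\tau r\cos(\theta-\beta)+O(\tau^{-1})}$. Since $\varphi^{*}<\pi$, I may pick $\beta$ opposite to the bisector of $D$ so that $\cos(\theta-\beta)\le -c<0$ uniformly on $D$; then $|v_{\tau}^{p}|,|v_{\tau}^{s}|$ together with $|Tv_{\tau}^{p}|,|Tv_{\tau}^{s}|$ are bounded by $C\tau^{2}e^{-c\tau r}$, in particular exponentially small on $\Lambda$, where $r=R$.

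Now apply Betti's second identity on $D$ with $v=v_{\tau}^{p}$: since $\mathcal{L}w=S-\omega^{2}w$ and $\mathcal{L}v_{\tau}^{p}=-\omega^{2}v_{\tau}^{p}$ the $\omega^{2}$-terms cancel, and the boundary contributions over $\Gamma_{R}^{\pm}$ vanish because $w=Tw=0$ there, leaving
\begin{equation*}
\int_{D}v_{\tau}^{p}\cdot S\,dx=\int_{\Lambda}\big(v_{\tau}^{p}\cdot Tw-w\cdot Tv_{\tau}^{p}\big)\,ds .
\end{equation*}
The right-hand side tends to $0$ as $\tau\to\infty$, because $\|v_{\tau}^{p}\|_{L^{2}(\Lambda)}+\|Tv_{\tau}^{p}\|_{L^{2}(\Lambda)}\le C\tau^{2}e^{-c\tau R}$ while the traces of $w$ and $Tw$ on $\Lambda$ are controlled by $\|w\|_{H^{2}(B_{R}(O))}$. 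For the left-hand side I run a Laplace/Watson-type expansion at the vertex: from $\int_{0}^{R}e^{\tau r z}r\,dr=\tau^{-2}z^{-2}+O(e^{-c\tau R})$ for $\mathrm{Re}\,z\le -c<0$, integration in $\theta$, and continuity of $S$ at $O$ (which makes the remainder $o(\tau^{-2})$), one gets $\int_{D}e^{\rho\cdot x}g\,dx=\tau^{-2}\kappa(\varphi^{*},\beta)\,g(O)+o(\tau^{-2})$ with $\kappa(\varphi^{*},\beta)\propto 1-e^{-2i\varphi^{*}}\neq 0$ precisely because $\varphi^{*}\in(0,\pi)$. Taking $g=\rho\cdot S$ and using $\rho=\tau e^{-i\beta}(1,i)+O(\tau^{-1})$, multiplication by $\tau$ yields $\tau\int_{D}v_{\tau}^{p}\cdot S\,dx\to c'\,(S_{1}(O)+iS_{2}(O))$ with $c'\neq 0$, hence $S_{1}(O)+iS_{2}(O)=0$. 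Repeating with the conjugate solution $\overline{v_{\tau}^{p}}=\bar\rho\,e^{\bar\rho\cdot x}$ --- still a solution with the same decay since $\mathcal{L},\omega^{2},k_{p}^{2}$ are real, and with $\bar\rho\cdot S\propto S_{1}-iS_{2}$ --- gives $S_{1}(O)-iS_{2}(O)=0$; together these force $S(O)=0$, i.e. $S_{1}(O)=S_{2}(O)$. (If the source is a priori real-valued, one family already suffices; $v_{\tau}^{s}$ produces the same combination and may be used in place of $v_{\tau}^{p}$.)

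The main obstacle is twofold. The conceptual part is the non-convex case $\varphi>\pi$, where no admissible real direction exists for $D_{R}$ and one must instead run the argument on the complementary convex sector; this is valid only because $w$ solves the equation across the whole disk and the Cauchy data vanishes on the shared edges, and it is where the restriction $\varphi\neq\pi$ becomes unavoidable. The technical part is the vectorial bookkeeping of the corner asymptotics: a single CGO family determines only $S_{1}(O)\pm iS_{2}(O)$, so one is forced to combine a complex exponential with its conjugate to recover the full vector $S(O)$ --- a step with no analogue in the scalar Helmholtz setting --- while at the same time one must confirm the nonvanishing of the leading Laplace coefficient, which is the arithmetic heart of the angle restriction. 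The remaining ingredients --- the trace bounds on $\Lambda$, the $o(\tau^{-2})$ remainder from continuity of $S$, and the elementary radial integral --- are routine.
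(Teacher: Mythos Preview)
Your argument is essentially correct, but it takes a markedly different route from the paper's own proof. The paper does \emph{not} use CGO test functions, Betti's identity, or corner Laplace asymptotics at all; instead it gives a purely local differentiation argument. With $w=u_{1}-u_{2}$, the paper first invokes Lemma~\ref{lem1} to convert $w=Tw=0$ on $\Gamma_{R}^{\pm}$ into $w=\partial_{\nu}w=0$ there, so that $w(O)=0$ and $\nabla w(O)=0$. Then it observes that since $\varphi\neq\pi$ the two tangent directions $\tau_{1},\tau_{2}$ along $\Gamma_{R}^{+},\Gamma_{R}^{-}$ are linearly independent, so every second-order partial at $O$ is a linear combination of $\partial_{\tau_{1}}^{2}w,\ \partial_{\tau_{1}}\partial_{\tau_{2}}w,\ \partial_{\tau_{2}}^{2}w$. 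Differentiating the Cauchy data tangentially along each edge kills all three at $O$, hence $\nabla^{2}w(O)=0$, whence $\mathcal{L}w(O)=0$ and the equation gives $S_{1}(O)=S_{2}(O)$. The angle restriction $\varphi\neq\pi$ enters only through the linear independence of $\tau_{1},\tau_{2}$, not through any decay requirement on a complex exponential.

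As to what each approach buys: the paper's proof is far shorter and uses nothing beyond elementary linear algebra and the conversion lemma $Tw=0\Rightarrow\partial_{\nu}w=0$; it also makes transparent that the key mechanism is that two transversal lines of vanishing Cauchy data force all second derivatives to vanish at their intersection. Your CGO/Betti approach is heavier machinery but is the one that generalises: it is the standard engine behind corner-scattering results (cf.\ \cite{Blasten2018,Blasten2019,Hu2020}), it yields quantitative estimates rather than a bare vanishing statement, and it is what one would need to extract higher-order information such as $\nabla S(O)$ or to handle non-smooth sources. Your treatment of the case $\varphi>\pi$ by passing to the complementary convex sector is a nice point with no analogue in the paper's argument. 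One caveat common to both proofs: pointwise statements about $S(O)$ and, in the paper's case, about $\nabla^{2}w(O)$, implicitly require more regularity of $S$ (continuity for yours, essentially H\"older for the paper's, via Schauder) than the bare $u_{\ell}\in[H^{2}]^{2}$ hypothesis literally provides; this is tacit in the paper as well, since the conclusion already speaks of $S_{\ell}(O)$.
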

\begin{proof}
Let $\tau_{\ell}$, $\nu_{\ell}\in\mathbb{R}^{2}$ denote the unit tangential and normal vectors on $\Gamma_{R}^{\pm}$ directed into $D_{R}^{e}=B_{R}(O)\backslash\overline{D_{R}}$. In particular, $\nu_{1}=(0,-1)^{\top}$, $\tau_{1}=(1,0)^{\top}$. Set $w=(w_{1},w_{2})^{\top}=u_{1}-u_{2}$, we have
\begin{align}
&\mathcal{L}w+\omega^{2}w=S_{2}-S_{1} \quad {\rm in} \quad B_{R}(O), \label{eq1} \\
&w=Tw=0 \quad {\rm on} \quad \Gamma_{R}^{\pm}. \label{eq3}
\end{align}
Since the opening angle of $D_{R}$ is not $\pi$, the tangential and normal vectors are linearly independent. Without loss of generality we suppose that $\nu_{1}=c_{1}\tau_{1}+c_{2}\tau_{2}$ with $c_{1}$, $c_{2}\in\mathbb{R}$, $c_{2}\neq0$. Hence,
\begin{eqnarray*}
\partial_{\tau_{2}}=\frac{1}{c_{2}}\partial_{\nu_{1}}-\frac{c_{1}}{c_{2}}\partial_{\tau_{1}}.
\end{eqnarray*}
Then we conclude that for all $l\in\mathbb{N}$,
\begin{eqnarray} \label{eq5}
\nabla_{x}^{l}\subset{\rm span}\{\partial_{\tau_{1}}^{l},\partial_{\tau_{1}}^{l-1}\partial_{\tau_{2}},\partial_{\tau_{1}}^{l-2}\partial_{\tau_{2}}^{2}, \cdots,\partial_{\tau_{1}}^{2}\partial_{\tau_{2}}^{l-2},\partial_{\tau_{1}}\partial_{\tau_{2}}^{l-1},\partial_{\tau_{2}}^{l}\}.
\end{eqnarray}

{\bf Step 1.} We show that $w=0$, $\nabla_{x}w=0$ at $O$.

The result $w(O)=0$ follows immediately from $w=0$ on $\Gamma_{R}^{\pm}$. Applying Lemma \ref{lem1} and the boundary condition (\ref{eq3}) we
get
\begin{eqnarray}  \label{eq8}
\partial_{\tau_{1}}w=\partial_{\nu_{1}}w=0 \quad {\rm on} \quad \Gamma_{R}^{\pm}.
\end{eqnarray}
Since $\nu_{1}$ and $\tau_{1}$ are linearly independent, we have $\nabla_{x}w=0$ at $O$. Therefore, $\nabla_{x} w(O)=0$.

{\bf Step 2.} We show that $\nabla_{x}^{2}w=0, \quad \mathcal{L}w=0, \quad S_{2}-S_{1}=0$ at $O$.

From the boundary condition (\ref{eq8}) we see that
\begin{align*}
& \partial_{\tau_{1}}^{2}w=\partial_{\tau_{1}}\partial_{\nu_{1}}w=0, \quad \partial_{\tau_{2}}^{2}w=\partial_{\tau_{2}}\partial_{\nu_{2}}w=0, \\
& \partial_{\tau_{1}}\partial_{\tau_{2}}w =\frac{1}{c_{2}}\partial_{\tau_{1}}\partial_{\nu_{1}}w -\frac{c_{1}}{c_{2}}\partial_{\tau_{1}}^{2}w=0,
\end{align*}
implying that $\nabla_{x}^{2}w=0$ at $O$ due to the relation (\ref{eq5}). Then $\nabla_{x}^{2}w(O)=0$ and $\mathcal{L}w(O)=0$. We obtain from (\ref{eq1}) that $S_{2}(O)-S_{1}(O)=0$.
\end{proof}
\bigskip

{\bf Proof of Theorem \ref{Main}}. Assume that $u_1^{\infty}=u_2^{\infty}$, then $u_1(x)=u_2(x)$ for all $x\in\mathbb{R}^2 \backslash(\overline{D_1 \cup D_2})$. If $D_1 \neq D_2$, without loss of generality we may assume there exists a corner $O$ of $\partial D_{1}$ such that $O\not\in\overline{D_{2}}$. Notice that this step cannot be achieved if $D_{1}$ and $D_{2}$ are not convex.

Since this corner stays away from $D_{2}$, the function $u_{2}$ satisfies $\mathcal{L} u_{2}+\omega^{2} u_{2}=0$ in $B_{R}(O)$ where $B_{R}(O)\subseteq\mathbb{R}^{2}\backslash \overline{D_{2}}$ contains only one corner of $D_{1}$, while $u_{1}$ fulfills $\mathcal{L} u_{1}+\omega^{2} u_{1}=S_{1}$ with $S_{1}(O)\neq0$. The transmission conditions between $u_{1}$ and $u_{2}$ on $\partial D_{1}\cap B_{R}(O)$ follow from those of $u_{1}$ across $\partial D_{1}$ and the relation $u_1(x)=u_2(x)$ for all $x\in\mathbb{R}^2 \backslash(\overline{D_1 \cup D_2})$. Now, applying Lemma \ref{mainlem} to $u_{1}$, $u_{2}$ in $B_{R}(O)$, we obtain that $S_{1}(O)=0$, contradicting our assumption. Thus $D_1=D_2$.

\section*{Acknowledgments}

This research is supported by the National Natural Science Foundation of China (12301542), the Natural Science Foundation of Yichang (A23-2-027), the Open Research Fund of Hubei Key Laboratory of Mathematical Sciences (Central China Normal University, MPL2025ORG017) and the China Scholarship Council.

\section*{Data availability}

No data was used for the research described in the article.

\end{document}